\newtheorem{theorem}{Theorem}[section]
\newtheorem{lemma}[theorem]{Lemma}
\newtheorem{proposition}[theorem]{Proposition}
\theoremstyle{definition}
\newtheorem{remark}[theorem]{Remark}
\numberwithin{equation}{section}
\def\Plambda{(1.1)_{\lambda}}
\def\Pext{(1.1)_{\lambda^\star}}
\begin{document}

\begin{frontmatter}

\title{$W^{1,q}$ estimates for the extremal 
solution of reaction-diffusion problems}
\author{Manel Sanch\'on}
\ead{msanchon@maia.ub.es}
\address{Departament de Matem\`atica Aplicada i An\`alisi,
Universitat de Barcelona, Gran Via {\rm 585}, {\rm 08007} Barcelona, Spain}

\begin{abstract}
We establish a new $W^{1,2\frac{n-1}{n-2}}$ estimate for the extremal solution 
of $-\Delta u=\lambda f(u)$ in a smooth bounded domain $\Omega$ of $\mathbb{R}^n$, 
which is convex, for arbitrary positive and increasing nonlinearities $f\in C^1(\mathbb{R})$ 
satisfying $\lim_{t\rightarrow +\infty}f(t)/t=+\infty$.  
\end{abstract}

\begin{keyword}
Regularity of extremal solutions \sep second fundamental form of ${\rm Graph}(u)$

\MSC[2010] 35K57 \sep   
35B65 \sep   
35J60 

\end{keyword}

\end{frontmatter}


\section{Introduction}
Let $\Omega$ be a smooth bounded domain of $\mathbb{R}^n$ and consider the 
reaction-diffusion problem
\stepcounter{equation}
$$
\left\{
\begin{array}{rcll}
-\Delta u&=&\lambda f(u)&\textrm{in }\Omega,\\
u&=&0&\textrm{on }\partial \Omega,
\end{array}
\right. \eqno{(1.1)_{\lambda}}
$$
where $\lambda$ is a positive parameter and $f$ is a positive and 
increasing $C^1$ function satisfying 
\begin{equation}\label{superlinear}
\lim_{t\rightarrow+\infty}\frac{f(t)}{t}=+\infty.
\end{equation}

Crandall and Rabinowitz~\cite{CR75} proved, using the Implicit 
Function Theorem, the existence of an extremal parameter $\lambda^\star
\in(0+\infty)$ such that problem $\Plambda$ admits a classical 
minimal solution $u_\lambda$ for all $\lambda\in (0,\lambda^\star)$. 
Here, minimal means that it is smaller than any other nonnegative solution. 
Moreover, the least eigenvalue of the linearized operator at $u_\lambda$, 
$-\Delta -\lambda f'(u_\lambda)$, is positive for all $\lambda\in(0,\lambda^\star)$. 
Alternatively, this can be reached by using an iteration argument to obtain that 
$u_\lambda$ is an absolute minimizer of the associated energy 
functional 
\begin{equation}\label{functional}
J(u_\lambda):=\int_\Omega|\nabla u_\lambda|^2-\lambda F(u_\lambda)\,dx,
\end{equation}
in the convex set $\{w\in H^1_0(\Omega):0\leq w\leq u_\lambda\}$, where $F'=f$. 
In particular, $u_\lambda$ will be semi-stable in the sense that the second 
variation of energy at $u_\lambda$ is nonnegative definite:
\begin{equation}\label{semi1}
Q_{u_\lambda}(\varphi):=\int_\Omega |\nabla \varphi|^2 -\lambda f'(u_\lambda)\varphi^2\,dx\geq 0
\quad\textrm{for all }\varphi\in  C^1_0(\Omega).
\end{equation}
Brezis \textit{et al.}~\cite{BCMR96} proved that there is no weak solution for 
$\lambda>\lambda^\star$, while the increasing limit 
$$
u^\star:=\lim_{\lambda\uparrow\lambda^\star}u_\lambda
$$
is a weak solution of the extremal problem $\Pext$, \textit{i.e.}, $u^\star\in L^1(\Omega)$, 
\linebreak $f(u^\star)\,{\rm dist}(\cdot,\partial\Omega)\in L^1(\Omega)$, and 
$$
\int_\Omega u^\star (-\Delta\varphi)\,dx=\lambda\int_\Omega f(u^\star)\varphi\,dx
\quad \textrm{for all }\varphi\in C^2_0(\overline{\Omega}).
$$
This solution is known as \textit{extremal solution} of the extremal 
problem $\Pext$. 

The study of the regularity of the extremal solution started to growth after 
Brezis and V\'azquez raised some open problems in \cite{BV97}. In this direction, 
Nedev~\cite{Nedev01} proved, in an unpublished preprint, that $u^\star\in H^1_0(\Omega)$
for every positive and increasing nonlinearity $f$ satisfying \eqref{superlinear} 
when the domain is convex (see also Theorem~2.9 in \cite{CS11}). 
The proof uses the Poho${\rm\check{z}}$aev identity and the fact that $u_\lambda$ is 
an absolute minimizer of the functional $J$, defined in \eqref{functional}, on the compact set 
$\{w\in H^1_0(\Omega):0\leq w\leq u_\lambda\}$, and hence, $J(u_\lambda)\leq J(0)=0$.

Our main result establishes that $u^\star\in W^{1,2\frac{n-1}{n-2}}_0(\Omega)$ 
for any convex domain $\Omega$ and any nonlinearity $f$satisfying the above assumptions. 
In particular, $u^\star\in H^1_0(\Omega)$. We prove it using a geometric Sobolev inequality 
on the graph of minimal solutions $u_\lambda$.
\begin{theorem}\label{theorem}
Let $\Omega$ be a smooth bounded domain of $\mathbb{R}^n$ with $n\geq 3$ and $f$ 
a positive and increasing $C^1$ function satisfying \eqref{superlinear}. Let 
$u_\lambda\in C^2_0(\overline{\Omega})$ be the minimal solution of 
$\Plambda$ for $\lambda\in(0,\lambda^\star)$ and
$$
I(t):=\int_{\{u_\lambda\geq t\}}(1+|\nabla u_\lambda|^2)^{\frac{n-1}{n-2}}\,dx,
\quad t\in(0,\|u_\lambda\|_{L^\infty(\Omega)}).
$$
 
There exists a positive constant $C$ depending only on $n$ such that the 
following inequality holds
\begin{equation}\label{estimate}
\begin{array}{l}
\displaystyle C I(t)^{2\frac{n-1}{n}}
\leq 
\frac{1}{t^2}\left(\int_{\{u_\lambda \leq t\}} (1+|\nabla u_\lambda|^2)|\nabla u_\lambda|^2 \,dx\right)
I(t)
\\
\displaystyle \hspace{3.5cm}
+\left(\int_{\{u_\lambda=t\}}(1+|\nabla u_\lambda|^2)^{\frac{1}{2}\frac{n-1}{n-2}}\,dS\right)^2
\end{array}
\end{equation}
for a.e. $t\in(0,\|u_\lambda\|_{L^\infty(\Omega)})$.

If in addition $\Omega$ is convex then 
the extremal solution $u^\star\in W^{1,2\frac{n-1}{n-2}}_0(\Omega)$. 
\end{theorem}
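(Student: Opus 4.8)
The plan is to prove the inequality \eqref{estimate} by running a Sobolev inequality on the hypersurface $\mathrm{Graph}(u_\lambda)\subset\mathbb{R}^{n+1}$, and then to obtain the regularity of $u^\star$ by evaluating \eqref{estimate} at a fixed small level $t_0$ and passing to the limit $\lambda\uparrow\lambda^\star$. Throughout write $W:=(1+|\nabla u_\lambda|^2)^{1/2}$. For a regular value $t$ of $u_\lambda$ (a.e.\ $t$, by Sard's theorem) let $\Sigma_t:=\{(x,u_\lambda(x)):x\in\Omega,\ u_\lambda(x)>t\}$, an $n$-dimensional submanifold with $d\mathcal H^n=W\,dx$; since $\{u_\lambda>t\}\Subset\Omega$, its only boundary is $\partial\Sigma_t$, the graph over $\{u_\lambda=t\}$, which is isometric to $\{u_\lambda=t\}$ because $u_\lambda$ is constant there. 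I would apply the Michael--Simon Sobolev inequality on $\Sigma_t$ to $g:=W^{\frac{n-1}{n-2}}$:
\[
\Bigl(\int_{\Sigma_t}g^{\frac{n}{n-1}}\,d\mathcal H^n\Bigr)^{\frac{n-1}{n}}\le C(n)\Bigl(\int_{\Sigma_t}\bigl(|\nabla_{\Sigma_t}g|+g\,|\vec H_{\Sigma_t}|\bigr)\,d\mathcal H^n+\int_{\partial\Sigma_t}g\,dS\Bigr),
\]
where $\vec H_{\Sigma_t}$ is the mean curvature vector of $\mathrm{Graph}(u_\lambda)$. Since $g^{\frac{n}{n-1}}W=W^{2\frac{n-1}{n-2}}=(1+|\nabla u_\lambda|^2)^{\frac{n-1}{n-2}}$, the left-hand side is $I(t)^{\frac{n-1}{n}}$ and the boundary term is $\int_{\{u_\lambda=t\}}(1+|\nabla u_\lambda|^2)^{\frac12\frac{n-1}{n-2}}\,dS$. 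Squaring, using $(a+b)^2\le2a^2+2b^2$, and estimating the interior integral by Cauchy--Schwarz against $\bigl(\int_{\Sigma_t}W^{\frac{n}{n-2}}\,d\mathcal H^n\bigr)^{1/2}=I(t)^{1/2}$, one reduces \eqref{estimate} to the \emph{geometric stability inequality}
\[
\int_{\{u_\lambda\ge t\}}\bigl(|\nabla_{\Sigma_t}W|^2+W^2|\vec H_{\Sigma_t}|^2\bigr)\,dx\ \le\ \frac{C}{t^2}\int_{\{u_\lambda\le t\}}(1+|\nabla u_\lambda|^2)|\nabla u_\lambda|^2\,dx .
\]

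To prove this last inequality I would insert $\varphi=W\eta$, with $\eta:=\min(u_\lambda/t,1)\in W^{1,\infty}_0(\Omega)$, into $Q_{u_\lambda}(\varphi)\ge0$ from \eqref{semi1}. Integrating by parts the cross term turns $Q_{u_\lambda}(W\eta)\ge0$ into $\int_\Omega(\lambda f'(u_\lambda)W^2+W\Delta W)\eta^2\,dx\le\int_\Omega W^2|\nabla\eta|^2\,dx=t^{-2}\int_{\{u_\lambda\le t\}}(1+|\nabla u_\lambda|^2)|\nabla u_\lambda|^2\,dx$, whose right-hand side is exactly the one we want. Differentiating $-\Delta u_\lambda=\lambda f(u_\lambda)$ gives $-\Delta\partial_iu_\lambda=\lambda f'(u_\lambda)\partial_iu_\lambda$, whence a direct Bochner-type computation yields $W\Delta W=|D^2u_\lambda|^2-\lambda f'(u_\lambda)|\nabla u_\lambda|^2-|\nabla W|^2$, so that $\lambda f'(u_\lambda)W^2+W\Delta W=\lambda f'(u_\lambda)+|D^2u_\lambda|^2-|\nabla W|^2\ge|D^2u_\lambda|^2-|\nabla W|^2$, using $f'\ge0$. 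It then remains to verify the pointwise inequality $|D^2u_\lambda|^2-|\nabla W|^2\ge c(n)\bigl(|\nabla_{\Sigma_t}W|^2+W^2|\vec H_{\Sigma_t}|^2\bigr)$; expanding $\nabla_{\Sigma_t}W$ and $\vec H_{\Sigma_t}=\mathrm{div}(\nabla u_\lambda/W)$ in terms of $D^2u_\lambda$ and $\nabla u_\lambda$ in an orthonormal frame with $\nabla u_\lambda$ along the first axis, and using $|\nabla u_\lambda|^2\le W^2$, both terms on the right are bounded by a multiple of $|D^2u_\lambda|^2-|\nabla W|^2$. This gives the geometric stability inequality, hence \eqref{estimate}.

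For the last assertion, fix a small $t_0>0$. The role of convexity is to provide, via Nedev's argument (the Poho\v{z}aev identity, which uses $x\cdot\nu\ge0$ on $\partial\Omega$, together with $J(u_\lambda)\le0$), enough uniform control on the minimal solutions: the family $\{u_\lambda\}_{\lambda<\lambda^\star}$ is bounded in $H^1_0(\Omega)$, and, combined with the standard fact that the extremal solution is regular in a fixed neighborhood $N$ of $\partial\Omega$, one gets for all $\lambda<\lambda^\star$ a bound $\|u_\lambda\|_{C^1(\overline N)}\le C$ with $\{u_\lambda<t_0\}\subset N$ once $t_0$ is small and fixed. Evaluating \eqref{estimate} at $t=t_0$, both terms on its right-hand side are then $\le C\,I(t_0)+C$ with $C$ independent of $\lambda$; since $2\frac{n-1}{n}>1$ for $n\ge3$, this absorbs the linear term and forces $I(t_0)=\int_{\{u_\lambda\ge t_0\}}(1+|\nabla u_\lambda|^2)^{\frac{n-1}{n-2}}\,dx\le C$ uniformly in $\lambda$. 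Adding $\int_{\{u_\lambda<t_0\}}(1+|\nabla u_\lambda|^2)^{\frac{n-1}{n-2}}\,dx\le C$ (again from the $C^1$ bound on $N$) gives $\|\nabla u_\lambda\|_{L^{2\frac{n-1}{n-2}}(\Omega)}\le C$ uniformly. Since $W^{1,2\frac{n-1}{n-2}}_0(\Omega)$ is reflexive and $u_\lambda\uparrow u^\star$ in $L^1(\Omega)$, letting $\lambda\uparrow\lambda^\star$ and using weak lower semicontinuity of the norm yields $u^\star\in W^{1,2\frac{n-1}{n-2}}_0(\Omega)$, and in particular $u^\star\in H^1_0(\Omega)$.

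I expect the main obstacle to be the coupling of the two ingredients in the proof of \eqref{estimate}: the Michael--Simon inequality unavoidably produces the mean-curvature term $\int_{\Sigma_t}g\,|\vec H_{\Sigma_t}|$, and it is precisely the semi-stability of $u_\lambda$, used with the test function $W\eta$, that converts the curvature integral $\int_{\{u_\lambda\ge t\}}(|\nabla_{\Sigma_t}W|^2+W^2|\vec H_{\Sigma_t}|^2)$ over the super-level set into the controllable quantity $t^{-2}\int_{\{u_\lambda\le t\}}(1+|\nabla u_\lambda|^2)|\nabla u_\lambda|^2$ over the complementary set; choosing the exponent $\tfrac{n-1}{n-2}$ in $g$ and the cutoff $\eta=\min(u_\lambda/t,1)$ so that the two computations fit together, and tracking the induced metric $\delta_{ij}+\partial_iu_\lambda\,\partial_ju_\lambda$ in the pointwise curvature estimate, is the technical heart of the argument. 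A secondary difficulty, in the final step, is the uniform-in-$\lambda$ regularity of $u_\lambda$ near $\partial\Omega$, which is where convexity of $\Omega$ enters.
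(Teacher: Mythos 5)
Your proposal is correct and follows essentially the same route as the paper: test the stability inequality with $\sqrt{1+|\nabla u_\lambda|^2}\,\min\{u_\lambda,t\}$, convert the resulting quadratic form via the geometric identity for the second fundamental form of ${\rm Graph}(u_\lambda)$ (your pointwise inequality $|D^2u_\lambda|^2-|\nabla W|^2\ge c(n)(|\nabla_T W|^2+W^2H^2)$ is exactly the content of Proposition~\ref{prop1} combined with $H^2\le nA^2$), and couple this with the Michael--Simon Sobolev inequality on the truncated graph applied to a power of $W$; the passage to $u^\star$ via uniform boundary estimates on convex domains and absorption using $2(n-1)/n>1$ is also the paper's argument. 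The only differences are presentational: you derive the squared Sobolev inequality inline by Cauchy--Schwarz instead of quoting Theorem~\ref{thm:Sobolev}, and you obtain the geometric identity through a Bochner-type computation rather than the $\nu^i_j$ calculus of Section~\ref{section2}.
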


In the last decade several authors studied the regularity of the extremal 
solution (see the monograph by Dupaigne~\cite{Dupaigne} and references therein). However, 
there are few results for general reaction terms $f$
(\textit{i.e.}, positive and increasing nonlinearities satisfying 
\eqref{superlinear}). 
Cabr\'e~\cite{Cabre10} established that $u^\star\in 
L^\infty(\Omega)$ when $n\leq4$ and the domain is convex. More recently, 
Cabr\'e and the author \cite{CS11} proved for $n\geq 5$ that there exists a constant $C$ 
depending only on $n$ such that
$$
\left(\int_{\{u_\lambda>t\}} \Big(u_\lambda-t\Big)^{\frac{2n}{n-4}}\ dx\right)^\frac{n-4}{2n} 
\leq 
\frac{C}{t}\left(\int_{\{u_\lambda\leq t\}} |\nabla u_\lambda|^4 \ dx\right)^{1/2}
$$
for all $t\in (0,\|u_\lambda\|_{L^\infty(\Omega)})$. 
As a consequence, it is proved that the extremal solution $u^\star$ belongs to 
$L^\frac{2n}{n-4}(\Omega)$ when the domain is convex and the dimension 
$n\geq 5$. 
The first step in the proof of both results is to take $\varphi=|\nabla u_\lambda|\eta$ 
as a test function in the semistability condition \eqref{semi1} and use the 
following geometric identity
\begin{equation}\label{SZ:identity}
\left(\Delta |\nabla u_\lambda|+\lambda f'(u_\lambda)|\nabla u_\lambda|\right)|\nabla u_\lambda|
=\bar{A}^2|\nabla u_\lambda|^2+|\nabla_{\bar{T}}|\nabla u_\lambda||^2 
\end{equation}
in $\{x\in\Omega:|\nabla u_\lambda|> 0\}$, where $\bar{A}^2(x)$ denotes the second fundamental form at $x$ of 
the $(n-1)$-dimensional hypersurface $\{y\in\Omega: |u_\lambda(y)|=|u_\lambda(x)|\}$ 
and $\nabla_{\bar{T}}$ is the tangential gradient with respect 
to this level set. Sternberg and Zumbrun~\cite{SZ1,SZ2} made this choice 
to obtain
$$
Q_{u_\lambda}(|\nabla u_\lambda|\eta)
=
\int_{\Omega\cap\{|\nabla u_\lambda|>0\} }|\nabla u_\lambda|^2|\nabla \eta|^2
    -\left(\bar{A}^2|\nabla u_\lambda|^2+|\nabla_{\bar{T}}|\nabla u_\lambda||^2\right)\eta^2\,dx
$$
for every Lipschitz function $\eta$ in $\overline{\Omega}$ such that 
$\eta|_{\partial\Omega}\equiv 0$, where $Q_{u_\lambda}$ is the quadratic form defined 
in \eqref{semi1}. 
The second step in the proof is to choose an appropriate function $\eta=\eta(u)$ 
and use the coarea formula and a Sobolev inequality on the $(n-1)$-dimensional 
hypersurface $\{y\in\Omega:u_\lambda(y)=u_\lambda(x)\}$.

The first ingredient in the proof of Theorem~\ref{theorem} is the following identity,
analogue to \eqref{SZ:identity}, involving the second fundamental form of ${\rm Graph}(u_\lambda)$.

\begin{proposition}\label{prop1}
Let $u\in C^3_0(\overline{\Omega})$ be a positive function and 
$v(x,x_{n+1}):=u(x)-x_{n+1}$ for all $(x,x_{n+1})\in\Omega\times\mathbb{R}$. Let 
$\nu=-\frac{\nabla v}{|\nabla v|}\in\mathbb{R}^{n+1}$ be the 
unit normal vector to ${\rm Graph}(u)$, $A^2$ the second fundamental 
form of ${\rm Graph}(u)$, and 
$\nabla _T\varphi :=\nabla\varphi -(\nu\cdot\nabla\varphi)\nu$ for every 
$\varphi\in C^1(\mathbb{R}^{n+1})$.
The following identity holds
\begin{equation}\label{key}
\left(\Delta |\nabla v|+\nu\cdot\nabla \Delta v\right)|\nabla v|
=
A^2|\nabla v|^2+|\nabla_T|\nabla v||^2\quad \textrm{in }\Omega.
\end{equation}
In particular, if $u\in C^2(\overline{\Omega})$ is a solution of $\Plambda$ and 
$f\in C^1(\mathbb{R})$ then 
\begin{equation}\label{identity}
\left(\Delta |\nabla v|+\lambda f'(u)|\nabla v|\right)|\nabla v|
=\lambda f'(u)+A^2|\nabla v|^2+|\nabla_T|\nabla v||^2\quad \textrm{in }\Omega.
\end{equation}
\end{proposition}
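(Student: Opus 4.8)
The plan is to deduce \eqref{key} from a pointwise ``Bochner-type'' identity that holds for an \emph{arbitrary} $C^3$ function with non-vanishing gradient, and then to read off \eqref{identity} from \eqref{key} by a purely algebraic substitution. Concretely, I would first establish that for every $w\in C^3$ on an open subset of $\mathbb{R}^m$ and at every point $p$ with $\nabla w(p)\neq 0$,
\begin{equation*}
|\nabla w|\,\Delta|\nabla w|-\nabla w\cdot\nabla\Delta w=|\mathcal A|^2\,|\nabla w|^2+|\nabla_{T}|\nabla w||^2,
\end{equation*}
where $\mathcal A$ is the second fundamental form of the level hypersurface $\{w=w(p)\}$ at $p$ (with unit normal $\nabla w/|\nabla w|$) and $\nabla_{T}$ is the corresponding tangential gradient. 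Granting this, \eqref{key} is immediate: apply the identity with $m=n+1$ and $w=v$, note that $\nabla v=(\nabla u,-1)$ never vanishes so it holds at every point of $\Omega\times\mathbb{R}$, and, since $\nu=-\nabla v/|\nabla v|$, rewrite the left-hand side as $\bigl(\Delta|\nabla v|+\nu\cdot\nabla\Delta v\bigr)|\nabla v|$.

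For the universal identity I would combine two standard facts. Bochner's formula $\tfrac12\Delta|\nabla w|^2=|D^2w|^2+\nabla w\cdot\nabla\Delta w$ together with the elementary $\tfrac12\Delta|\nabla w|^2=|\nabla w|\Delta|\nabla w|+|\nabla|\nabla w||^2$ already gives
\begin{equation*}
|\nabla w|\,\Delta|\nabla w|-\nabla w\cdot\nabla\Delta w=|D^2w|^2-|\nabla|\nabla w||^2 .
\end{equation*}
It then remains to identify the right-hand side. Fixing $p$ and rotating coordinates so that $\nabla w(p)=|\nabla w|(p)\,e_m$, a one-line computation gives $\partial_j|\nabla w|(p)=w_{mj}(p)$, so $|\nabla|\nabla w||^2(p)=\sum_j w_{mj}^2$ with tangential part $\sum_{j\le m-1}w_{mj}^2$. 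Splitting $|D^2w|^2=\sum_{i,j}w_{ij}^2$ into the blocks $i,j\le m-1$, the mixed terms, and $w_{mm}^2$, one gets $|D^2w|^2-|\nabla|\nabla w||^2=\sum_{i,j\le m-1}w_{ij}^2+\sum_{j\le m-1}w_{mj}^2$; since the shape operator of $\{w=w(p)\}$ in these coordinates is $(w_{ij}/|\nabla w|)_{i,j\le m-1}$, the first sum equals $|\mathcal A|^2|\nabla w|^2$ and the second equals $|\nabla_{T}|\nabla w||^2$.

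Two points remain for \eqref{key} and \eqref{identity}. First, since $\partial_{x_{n+1}}v\equiv-1$, every quantity in \eqref{key} is independent of $x_{n+1}$ (one has $\Delta v=\Delta u$, $|\nabla v|^2=1+|\nabla u|^2$, $\nabla\Delta v=(\nabla\Delta u,0)$), so the identity genuinely lives on $\Omega$; and the level sets of $v$ are the vertical translates $\{(x,u(x)-c):x\in\Omega\}$ of ${\rm Graph}(u)$, so the level-set second fundamental form is exactly $A^2$. Second, for \eqref{identity} substitute $\Delta v=\Delta u=-\lambda f(u)$, whence $(\nu\cdot\nabla\Delta v)|\nabla v|=-\nabla u\cdot\nabla\Delta u=\lambda f'(u)|\nabla u|^2=\lambda f'(u)(|\nabla v|^2-1)$, and rearrange. (If $u$ is only $C^2$ and $f\in C^1$ one should invoke the standard elliptic regularity upgrade $u\in W^{3,p}_{\rm loc}$ to make $\Delta|\nabla v|$ and $\nabla\Delta v$ meaningful, the identity then holding a.e.; in the application $u=u_\lambda$ is classical, so this is harmless.)

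The only genuinely delicate step is the second paragraph: one has to set up the rotated frame correctly and keep precise track of which second-derivative combinations are tangential, mixed, and normal, and then match the tangential Hessian block with the Frobenius norm of the level-set second fundamental form. Sign conventions are harmless here because every relevant quantity is squared, but the block bookkeeping is where an error would most easily slip in; everything else --- Bochner's formula, the chain rule for $|\nabla v|$, and the observation that $v$ is affine in $x_{n+1}$ --- is routine.
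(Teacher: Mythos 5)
Your proposal is correct, and it reaches \eqref{key} by a genuinely different route than the paper. The paper's proof is an index computation built around the auxiliary identity $\nabla_T\log|\nabla v|=(D\nu)\nu$ (obtained from the symmetry of the Hessian of $v$), after which $\Delta|\nabla v|=-(v_{ij}\nu^j)_i$ is expanded and the cross term is recognized as $|\nabla_T\log|\nabla v||^2\,|\nabla v|$. You instead prove the universal pointwise identity $|\nabla w|\,\Delta|\nabla w|-\nabla w\cdot\nabla\Delta w=|D^2w|^2-|\nabla|\nabla w||^2=|\mathcal A|^2|\nabla w|^2+|\nabla_T|\nabla w||^2$ via Bochner's formula plus a block decomposition of the Hessian in a frame adapted to $\nabla w(p)$; this is essentially the general-dimension form of the Sternberg--Zumbrun identity \eqref{SZ:identity}, specialized to $w=v$ on $\Omega\times\mathbb{R}$. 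Your frame computation checks out: $\partial_j|\nabla w|(p)=w_{mj}(p)$, the cancellation of the normal block $w_{mm}^2$, the identification of $\sum_{i,j\le m-1}w_{ij}^2$ with $|\mathcal A|^2|\nabla w|^2$ (consistent with the paper's convention $A^2=\nu^i_j\nu^j_i$, since in that frame the row and the diagonal entry of $D\nu$ in the normal direction vanish), and of $\sum_{j\le m-1}w_{mj}^2$ with $|\nabla_T|\nabla w||^2$ are all right. The passage from \eqref{key} to \eqref{identity} is algebraically the same in both arguments (both amount to $(\nu\cdot\nabla\Delta v)|\nabla v|=\lambda f'(u)|\nabla u|^2=\lambda f'(u)(|\nabla v|^2-1)$), and your remark on upgrading a $C^2$ solution to $W^{3,p}_{\rm loc}$ when $f$ is merely $C^1$ is, if anything, more careful than the paper's bare assertion that $u\in C^3(\overline\Omega)$. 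What each approach buys: yours rests on a standard, easily checkable formula (Bochner) and makes the geometric meaning of each term transparent at the cost of choosing coordinates; the paper's is coordinate-free and isolates the reusable lemma $\nabla_T\log|\nabla v|=(D\nu)\nu$, but the index bookkeeping is harder to audit.
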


\begin{remark}
(i) Let $u\in C^2(\overline{\Omega})$ be a solution of $\Plambda$. Note that 
$$
\Delta v=\sum_{i=1}^{n+1} v_{ii}=\sum_{i=1}^{n} u_{ii}=\Delta u
$$
and 
$$
\nabla \Delta v=(\nabla \Delta u,0)=(-\lambda f'(u)\nabla u,0)\in\mathbb{R}^{n+1}.
$$

(ii) Farina, Sciunzi, and Valdinoci \cite{FSV08} and Cesaroni, Novaga, 
and Valdinoci \cite{CNV} recently used identity \eqref{SZ:identity} to 
obtain one-dimensional symmetry of solutions to some reaction-diffusion 
equations. In this sense identity \eqref{identity} could be useful by 
itself.
\end{remark}

The main novelty in the proof of Theorem~\ref{theorem} is that we use a 
Sobolev inequality on the $n$-dimensional hypersurface 
$$
{\rm Graph}(u_\lambda)=\{(x,x_{n+1})\in \Omega\times\mathbb{R}: x_{n+1}=u_\lambda(x)\}
\subset\mathbb{R}^{n+1},
$$
instead on the level sets $\{y\in \Omega:u_\lambda(y)=u_\lambda(x)\}$ of $u_\lambda$ as in \cite{Cabre10,CS11}, 
and the geometric identity \eqref{identity}. More precisely, 
define $v_\lambda(x,x_{n+1}):=u_\lambda(x)-x_{n+1}$ for every $\lambda\in(0,\lambda^\star)$. Taking 
$\varphi=|\nabla v_\lambda|\eta$ in the semistability condition \eqref{semi1} and using 
identity \eqref{identity}, we obtain 
\begin{equation}\label{semi2}
\int_\Omega \Big(\lambda f'(u_\lambda)+A^2|\nabla v_\lambda|^2+|\nabla_T|\nabla v_\lambda||^2\Big)\eta^2\,dx
\leq
\int_\Omega |\nabla \eta|^2|\nabla v_\lambda|^2\,dx
\end{equation}
for every Lipschitz function $\eta$ in $\overline{\Omega}$ such that 
$\eta|_{\partial\Omega}\equiv 0$. 
Choosing $\eta=\min\{u_\lambda,t\}$ as a test function in \eqref{semi2} 
and using a geometric Sobolev inequality on the $n$-dimensional hypersurface 
$\{(x,x_{n+1})\in{\rm Graph}(u_\lambda):x_{n+1}\geq t\}$ 
(see Theorem~\ref{thm:Sobolev} below) we prove inequality \eqref{estimate} in Theorem~\ref{theorem}. 
The $W^{1,2\frac{n-1}{n-2}}$-estimate for the extremal solution 
follows from \eqref{estimate} and the convexity of the domain.

The paper is organized as follows. In section~\ref{section2} we recall a 
Sobolev inequality on $n$-dimensional hypersurfaces with boundary and we 
prove the geometric identities established in Proposition~\ref{prop1}. In 
section~\ref{section3} we prove Theorem~\ref{theorem}.
\section{Geometric indentities and inequalities. Proof of Proposition~\ref{prop1}}\label{section2}

The first ingredient in the proof of Theorem~\ref{theorem} is the 
following Sobolev inequality on $n$-dimensional hypersurfaces (see section 
28.5.3 in \cite{Burago}): \textit{Let  $M\subset  \mathbb{R}^{n+1}$ be a $C^2$ 
immersed $n$-dimensional compact hypersurface with $n\geq 2$.
There exists a constant $C(n)$ depending only on the dimension $n$ such that, 
for every $\phi\in C^1(M)$ it holds
\begin{equation}\label{MSsob1}
C(n)\left(\int_{M}|\phi|^\frac{n}{n-1}\,dV\right)^{\frac{n-1}{n}}
\leq 
\int_{M}(|H\phi|+|\nabla\phi|)\,dV+\int_{\partial M}|\phi|\,dS,
\end{equation}
where $H$ is the mean curvature of $M$.}

Let $p^\star:=np/(n-p)$ be the critical Sobolev exponent. 
Replacing $\phi$ by $\phi^\alpha$ in \eqref{MSsob1}, with 
$\alpha=2^\star/1^\star=2(n-1)/(n-2)$, and using H\"older and 
Minkowski inequalities it is standard to obtain the following result.

\begin{theorem}[\cite{Burago}]\label{thm:Sobolev}
Let  $M\subset  \mathbb{R}^{n+1}$ be a $C^2$ immersed $n$-dimensional 
compact hypersurface with $n\geq 3$.
There exists a constant $C = C(n)$ depending only on the dimension $n$ such that, 
for every $\phi\in C^1(M)$ it holds
\begin{equation}\label{MSsob}
\begin{array}{l}
\displaystyle C \left(\int_{M}|\phi|^{2^\star}\,dV\right)^{2\frac{n-1}{n}}
\leq 
\left(\int_{M}|\phi|^{2^\star}\,dV\right)
\left(\int_{M}(|H\phi|^2+|\nabla\phi|^2)\,dV\right)\\
\displaystyle \hspace{6cm}
+\left(\int_{\partial M}|\phi|^{2\frac{n-1}{n-2}}\,dS\right)^2,
\end{array}
\end{equation}
where $H$ is the mean curvature of $M$ and $2^\star = 2n/(n - 2)$.
\end{theorem}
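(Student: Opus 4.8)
The plan is to obtain \eqref{MSsob} from the first-order inequality \eqref{MSsob1} by pure algebraic manipulation, with all the content residing in the choice of exponent $\alpha := 2^\star/1^\star = 2(n-1)/(n-2)$. First I would apply \eqref{MSsob1} to $|\phi|^\alpha$ in place of $\phi$; since $\alpha>1$ for every $n\geq 3$, the function $|\phi|^\alpha$ is $C^1(M)$ even at the zero set of $\phi$, so the substitution is legitimate. Using $\big(|\phi|^\alpha\big)^{n/(n-1)}=|\phi|^{2^\star}$, which is exactly the identity $\alpha\cdot 1^\star=2^\star$ encoded in the definition of $\alpha$, together with $|\nabla(|\phi|^\alpha)|=\alpha|\phi|^{\alpha-1}|\nabla\phi|$, inequality \eqref{MSsob1} becomes
\[
C(n)\Big(\int_M|\phi|^{2^\star}\,dV\Big)^{\frac{n-1}{n}}
\leq
\int_M|\phi|^{\alpha-1}\big(|H\phi|+\alpha|\nabla\phi|\big)\,dV
+\int_{\partial M}|\phi|^{\alpha}\,dS.
\]

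Next I would estimate the interior integral. The pointwise Cauchy--Schwarz (Minkowski) inequality gives $|H\phi|+\alpha|\nabla\phi|\leq\sqrt{1+\alpha^2}\,\big(|H\phi|^2+|\nabla\phi|^2\big)^{1/2}$, and then the Hölder inequality with conjugate exponents $2,2$ yields
\[
\int_M|\phi|^{\alpha-1}\big(|H\phi|^2+|\nabla\phi|^2\big)^{1/2}\,dV
\leq
\Big(\int_M|\phi|^{2(\alpha-1)}\,dV\Big)^{1/2}\Big(\int_M\big(|H\phi|^2+|\nabla\phi|^2\big)\,dV\Big)^{1/2}.
\]
Here the second crucial exponent identity, $2(\alpha-1)=2^\star$, is what makes the scheme close: it turns the first factor into $\big(\int_M|\phi|^{2^\star}\,dV\big)^{1/2}$, the same quantity that appears on the left-hand side.

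Writing $S:=\int_M|\phi|^{2^\star}\,dV$ and combining the previous two displays, together with $\alpha=2(n-1)/(n-2)$ to rewrite the boundary term, I obtain
\[
C(n)\,S^{\frac{n-1}{n}}
\leq
\sqrt{1+\alpha^2}\,S^{1/2}\Big(\int_M\big(|H\phi|^2+|\nabla\phi|^2\big)\,dV\Big)^{1/2}
+\int_{\partial M}|\phi|^{2\frac{n-1}{n-2}}\,dS.
\]
Squaring both sides and applying the elementary bound $(x+y)^2\leq 2x^2+2y^2$, then using $1+\alpha^2\geq 1$, gives \eqref{MSsob} after dividing by $2(1+\alpha^2)$ and setting $C:=C(n)^2/\big(2(1+\alpha^2)\big)$, which depends only on $n$.

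I do not expect a genuine obstacle: the argument is a sequence of standard inequalities, and the whole point is the pair of exponent identities $\alpha\cdot 1^\star=2^\star$ and $2(\alpha-1)=2^\star$, which are forced by and simultaneously satisfied by the single choice $\alpha=2^\star/1^\star$. The only technical points worth checking are the $C^1$-regularity of $|\phi|^\alpha$ (guaranteed by $\alpha>1$) and the finiteness of every integral, which follows from the compactness of $M$ and $\phi\in C^1(M)$.
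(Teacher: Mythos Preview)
Your proposal is correct and follows exactly the route the paper indicates: substitute $|\phi|^{\alpha}$ with $\alpha=2^\star/1^\star$ into \eqref{MSsob1}, then use H\"older (with exponents $2,2$) and the pointwise Cauchy--Schwarz/Minkowski bound to close the estimate, the two exponent identities $\alpha\cdot 1^\star=2^\star$ and $2(\alpha-1)=2^\star$ being precisely what makes it work. Your handling of the constants and the $C^1$-regularity of $|\phi|^{\alpha}$ are both fine.
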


The second ingredient is identity \eqref{identity} in Proposition~\ref{prop1}. Before to 
prove it let us introduce some notation. 
Let $\Omega$ be a smooth bounded domain of $\mathbb{R}^n$, $v\in C^2(\Omega\times\mathbb{R})$, and
$$
\nu(x,x_{n+1})=-\frac{\nabla v}{|\nabla v|}(x,x_{n+1})
$$
the unit normal vector to the level set of $v$ passing throughout 
$(x,x_{n+1})\in\{|\nabla v|\neq 0\}$. Recall that the 
eigenvalues of $\nu$ are the $n$ principal curvatures $\kappa_1,\cdots,
\kappa_{n}$ of the level sets of $v$ and zero. In particular, 
the second fundamental form $A^2:=\kappa_1^2+\cdots+\kappa_{n}^2$ 
of the level sets of $v$ is given by $A^2=\nu^i_j\nu^j_i$, where as usual 
Einstein summation convention is used. We denote the gradient along the 
level sets of $v$ by $\nabla_T$, \textit{i.e.},
$$
\nabla_T\phi=\nabla\phi-(\nabla\phi\cdot\nu)\nu\quad\textrm{for any }
\phi\in C^1(\mathbb{R}^{n+1}).
$$

Let us prove the identities established in Proposition~\ref{prop1}.
\begin{proof}[Proof of Proposition~{\rm \ref{prop1}}]
Let $u\in C^3_0(\overline{\Omega})$ be a positive function and define 
$v(x,x_{n+1})=u(x)-x_{n+1}$ for all $x\in\Omega$. 

We claim that $\nabla_T{\rm log}|\nabla v|=(D\nu)\nu$. Indeed, noting that
$$
\begin{array}{lll}
\displaystyle -\frac{v_{ij}}{|\nabla v|}
&=&\displaystyle 
\frac{\left(\nu^i|\nabla v|\right)_j}{|\nabla v|}
=
\nu^i\nabla^j{\rm log}|\nabla v|+\nu^i_j
\\
&=&\displaystyle 
\nu^i\nabla^j_T{\rm log }|\nabla v|+(\nabla {\rm log }|\nabla v|\cdot\nu)\nu^j\nu^i+\nu^i_j
\end{array}
$$
and $v_{ij}=v_{ji}$ for all $i,j=1,\cdots,n+1$, we obtain 
$$
\nu^i_j=\nu^j_i+\nu^j\nabla_T^i{\rm log }|\nabla v|-\nu^i\nabla_T^j{\rm log }|\nabla v|
\quad \textrm{for all }i,j=1,\cdots,n+1.
$$
We prove the claim multiplying the previous equality by $\nu^j$ and noting 
that $\nu^i_j\nu^i=0$ for every $j=1,\cdots,n+1$ and $\nabla_T{\rm log }|\nabla v|\cdot\nu=0$.

Now, using $\nu^i_j\nu^j_i=A^2$ and $\nabla_T^j{\rm log}|\nabla v|=\nu^j_i\nu^i$, 
we compute
$$
\begin {array}{lll}
\Delta |\nabla v|&=&-\displaystyle (v_{ij}\nu^j)_{i}
=-\nu\cdot\nabla \Delta v-v_{ij}\nu^j_i
\\
&=&\displaystyle-\nu\cdot\nabla \Delta v+\left(|\nabla v|\nu^i\right)_j\nu^j_i
\\
&=&\displaystyle 
-\nu\cdot\nabla \Delta v+|\nabla v|\nu^i_j\nu^j_i+|\nabla v|_j\nabla_T^j{\rm log}|\nabla v|
\\
&=&\displaystyle 
-\nu\cdot\nabla \Delta v+(A^2+|\nabla_T{\rm log}|\nabla v||^2)|\nabla v|
\end{array}
$$
to obtain identity \eqref{key}.

If $u\in C^2(\overline{\Omega})$ is a solution of $\Plambda$ and 
$f\in C^1(\mathbb{R})$, then by standard regularity results for 
uniformly elliptic equations one has $u\in C^3(\overline{\Omega})$. 
From \eqref{key} and noting that 
$$
\nabla\Delta v=(-\lambda f'(u)\nabla u,0)\quad 
\textrm{and}\quad 
\nu=\frac{1}{|\nabla v|}(-\nabla u,1),
$$ 
we obtain
$$
\Delta |\nabla v|
=
-\lambda f'(u)\frac{|\nabla u|^2}{|\nabla v|}+(A^2+|\nabla_T{\rm log}|\nabla v||^2)|\nabla v|
$$
proving the proposition.
\end{proof}

\section{Proof of Theorem~\ref{theorem}}\label{section3}

Let $u_\lambda$ be the minimal solution of $\Plambda$ for $\lambda\in(0,\lambda^\star)$. 
Choosing $\varphi=\sqrt{1+|\nabla u_\lambda|^2}\,\eta$ as a test function in the 
semistability condition \eqref{semi1} and using Proposition~\ref{prop1}, 
we first obtain~\eqref{semi2}.
\begin{lemma}\label{lemma3:1}
Assume that $\Omega$ is a smooth bounded domain of $\mathbb{R}^n$ and $f$ 
a positive and increasing $C^1$ function satisfying \eqref{superlinear}. Let $u_\lambda$
be the minimal solution of $\Plambda$ and $v_\lambda(x,x_{n+1}):=u_\lambda(x)-x_{n+1}$
for $\lambda\in(0,\lambda^\star)$. The following inequality holds 
\begin{equation}\label{quadratic_form}
\int_\Omega\left(\lambda f'(u_\lambda)+A^2|\nabla v_\lambda|^2+|\nabla_T|\nabla v_\lambda||^2\right)\eta^2\,dx
\leq 
\int_\Omega |\nabla v_\lambda|^2|\nabla\eta|^2 \,dx
\end{equation}
for every Lipschitz function $\eta$ in $\overline{\Omega}$ with 
$\eta|_{\partial\Omega}\equiv0$, where $A^2$ and $\nabla _T$ are 
as in Proposition~{\rm\ref{prop1}}.
\end{lemma}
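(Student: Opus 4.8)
The plan is to take $\varphi=\sqrt{1+|\nabla u_\lambda|^2}\,\eta=|\nabla v_\lambda|\,\eta$ as a test function in the semistability inequality \eqref{semi1}, where $\eta$ is a Lipschitz function on $\overline\Omega$ vanishing on $\partial\Omega$. Since $u_\lambda\in C^3(\overline\Omega)$ by elliptic regularity, we have $|\nabla v_\lambda|=\sqrt{1+|\nabla u_\lambda|^2}\geq 1>0$ everywhere in $\overline\Omega$, so $|\nabla v_\lambda|$ is smooth and $\varphi$ is an admissible Lipschitz test function; no truncation near the zero set of the gradient is needed (this is one advantage over working with $|\nabla u_\lambda|$ directly). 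The first step is therefore simply to substitute and expand: writing $\varphi=|\nabla v_\lambda|\eta$ on the domain $\Omega\times\{0\}$, which we identify with $\Omega$, we get
\begin{equation*}
Q_{u_\lambda}(\varphi)=\int_\Omega |\nabla(|\nabla v_\lambda|\eta)|^2-\lambda f'(u_\lambda)|\nabla v_\lambda|^2\eta^2\,dx\geq 0.
\end{equation*}

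Next I would expand the gradient term by the product rule, $|\nabla(|\nabla v_\lambda|\eta)|^2=|\nabla v_\lambda|^2|\nabla\eta|^2+2|\nabla v_\lambda|\eta\,\nabla|\nabla v_\lambda|\cdot\nabla\eta+|\nabla|\nabla v_\lambda||^2\eta^2$, and integrate the cross term by parts. Using $2|\nabla v_\lambda|\nabla|\nabla v_\lambda|=\nabla(|\nabla v_\lambda|^2)$ and integrating by parts (the boundary term vanishes since $\eta|_{\partial\Omega}=0$) converts $\int_\Omega \eta\,\nabla(|\nabla v_\lambda|^2)\cdot\nabla\eta\,dx$ into $-\int_\Omega \eta^2\,|\nabla v_\lambda|\,\Delta|\nabla v_\lambda|\,dx-\int_\Omega\eta\,|\nabla v_\lambda|\nabla|\nabla v_\lambda|\cdot\nabla\eta\,dx$; solving for the cross term and collecting everything gives the standard identity
\begin{equation*}
Q_{u_\lambda}(|\nabla v_\lambda|\eta)=\int_\Omega |\nabla v_\lambda|^2|\nabla\eta|^2\,dx-\int_\Omega\Big(|\nabla v_\lambda|\Delta|\nabla v_\lambda|-|\nabla|\nabla v_\lambda||^2+\lambda f'(u_\lambda)|\nabla v_\lambda|^2\Big)\eta^2\,dx.
\end{equation*}
It then remains to substitute identity \eqref{identity} from Proposition~\ref{prop1}, which says precisely that $|\nabla v_\lambda|\Delta|\nabla v_\lambda|+\lambda f'(u_\lambda)|\nabla v_\lambda|^2=\lambda f'(u_\lambda)+A^2|\nabla v_\lambda|^2+|\nabla_T|\nabla v_\lambda||^2+|\nabla|\nabla v_\lambda||^2$, so the bracketed integrand becomes $\lambda f'(u_\lambda)+A^2|\nabla v_\lambda|^2+|\nabla_T|\nabla v_\lambda||^2$. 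Combining with $Q_{u_\lambda}(\varphi)\geq 0$ yields \eqref{quadratic_form} directly.

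The main point requiring care — rather than a genuine obstacle — is the bookkeeping when passing from the $(n+1)$-dimensional quantities $v_\lambda$, $\nu$, $A^2$, $\nabla_T$ to integrals over $\Omega\subset\mathbb{R}^n$: one must check that evaluating everything on the slice $x_{n+1}=0$ (equivalently, using that $v_\lambda$, its derivatives, and $A^2$ are independent of $x_{n+1}$) is legitimate, and that $\nabla(|\nabla v_\lambda|\eta)$ in the test-function computation is the $\mathbb{R}^n$-gradient while $\nabla_T$ in \eqref{identity} refers to the tangential gradient in $\mathbb{R}^{n+1}$. Since $v_\lambda$ depends only on $x\in\Omega$, all these derivatives commute with the slicing and the decomposition $|\nabla|\nabla v_\lambda||^2=|\nabla_T|\nabla v_\lambda||^2+(\nu\cdot\nabla|\nabla v_\lambda|)^2$ accounts for the extra term correctly; the identity \eqref{identity} has already absorbed this. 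Thus the proof is essentially the substitution of the Sternberg–Zumbrun-type identity into the standard expansion of the stability quadratic form, and no hard estimate is involved.
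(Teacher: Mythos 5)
Your strategy is exactly the paper's: test \eqref{semi1} with $\varphi=|\nabla v_\lambda|\eta$, integrate the cross term by parts, and substitute identity \eqref{identity}; your remark that $|\nabla v_\lambda|\ge 1$ makes $\varphi$ admissible without any truncation near a vanishing gradient is also correct and is indeed the point of working on the graph. However, the two displayed formulas your argument rests on are each off by the same term $|\nabla|\nabla v_\lambda||^2\,\eta^2$, and the proof only closes because these two errors cancel.

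Write $w:=|\nabla v_\lambda|$. Integrating the cross term by parts gives
\begin{equation*}
\int_\Omega 2\eta\, w\,\nabla w\cdot\nabla\eta\,dx=\int_\Omega (w\nabla w)\cdot\nabla(\eta^2)\,dx=-\int_\Omega\bigl(w\Delta w+|\nabla w|^2\bigr)\eta^2\,dx,
\end{equation*}
so the $|\nabla w|^2\eta^2$ produced here cancels exactly the one coming from the product-rule expansion of $|\nabla(w\eta)|^2$, and the correct identity is
\begin{equation*}
Q_{u_\lambda}(w\eta)=\int_\Omega w^2|\nabla\eta|^2\,dx-\int_\Omega\bigl(w\Delta w+\lambda f'(u_\lambda)w^2\bigr)\eta^2\,dx,
\end{equation*}
with no residual $-|\nabla w|^2$ in the bracket (your intermediate ``conversion'' of $\int\eta\,\nabla(w^2)\cdot\nabla\eta\,dx$ is not a valid integration by parts as stated). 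On the other hand, identity \eqref{identity} reads $w\Delta w+\lambda f'(u_\lambda)w^2=\lambda f'(u_\lambda)+A^2w^2+|\nabla_T w|^2$, \emph{without} the extra $+|\nabla w|^2$ you attribute to it. Substituting the correct identity into the correct expansion yields \eqref{quadratic_form} at once, which is precisely the paper's proof. As written, though, each of your two steps is false, and either slip alone would break the argument: your expansion combined with the true \eqref{identity} only gives the weaker inequality in which $|\nabla_T w|^2$ is replaced by $|\nabla_T w|^2-|\nabla w|^2=-(\nu\cdot\nabla w)^2\le 0$, while the true expansion combined with your version of \eqref{identity} would assert a strengthening of \eqref{quadratic_form} that does not follow from Proposition~\ref{prop1}. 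The fix is purely a matter of redoing the bookkeeping of the $|\nabla w|^2$ term; no new idea is needed.
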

\begin{proof}
In order to improve the notation, let us denote $u_\lambda=u$ and $v_\lambda=v$ 
for $\lambda\in(0,\lambda^\star)$.
Choosing $\varphi=|\nabla v|\eta$ as a test function in \eqref{semi1} and 
integrating by parts we get 
$$
\begin{array}{l}
0\leq Q_u(|\nabla v|\eta)\\
\hspace{0.2cm}=
\displaystyle 
\int_\Omega |\nabla v|^2|\nabla\eta|^2
   +|\nabla v|\nabla |\nabla v|\cdot\nabla\eta^2
   +|\nabla |\nabla v||^2\eta^2
   -\lambda f'(u)|\nabla v|^2\eta^2\,dx
\\
\hspace{0.2cm}=\displaystyle 
\int_\Omega |\nabla v|^2|\nabla\eta|^2
   -({\rm div}(|\nabla v|\nabla |\nabla v|)
   -|\nabla |\nabla v||^2
   +\lambda f'(u)|\nabla v|^2)\eta^2\,dx
\\
\hspace{0.2cm}=\displaystyle 
\int_\Omega |\nabla v|^2|\nabla\eta|^2
   -\left(|\nabla v|\Delta |\nabla v|
   +\lambda f'(u)|\nabla v|^2\right)\eta^2\,dx.
\end{array}
$$
Inequality \eqref{quadratic_form} follows directly from identity \eqref{identity}.
\end{proof}

Finally, using Lemma~\ref{lemma3:1} and the geometric Sobolev inequality 
established in Theorem~\ref{thm:Sobolev} we prove Theorem~\ref{theorem}.
\begin{proof}[Proof of Theorem~{\rm\ref{theorem}}]
Let $u_\lambda\in C^2_0(\overline{\Omega})$ be the minimal solution of $\Plambda$ 
for $\lambda\in(0,\lambda^\star)$ and $t\in(0,\|u_\lambda\|_{L^\infty(\Omega)})$. 
Define $v_\lambda(x,x_{n+1})=u_\lambda(x)-x_{n+1}$. 
Let $M_t:=\{(x,x_{n+1})\in{\rm Graph}(u_\lambda):x_{n+1}\geq t\}$ and 
$dV=\sqrt{1+|\nabla u_\lambda|^2}\,dx$ its element of volume.

We start by proving inequality \eqref{estimate}. 
On the one hand, taking $\eta=\min\{u_\lambda,t\}$ as a test function in 
\eqref{quadratic_form}, using that $f$ is an increasing function,
and $H^2=(\kappa_1+\cdots+\kappa_n)^2\leq nA^2=n(\kappa_1^2+\cdots+\kappa_{n}^2)$, 
we obtain
\begin{eqnarray}\label{cris}
\displaystyle 
\int_{M_t}\hspace{-0.2cm}\left(H^2|\nabla v_\lambda|+|\nabla_T|\nabla v_\lambda|^\frac{1}{2}|^2\right)\,dV
&\hspace{-0.3cm}\leq&\displaystyle \hspace{-0.3cm}
\int_{\{u_\lambda\geq t\}}\hspace{-0.1cm}\left(nA^2|\nabla v_\lambda|^2+\frac{1}{4}|\nabla_T|\nabla v_\lambda||^2\right)\,dx
\nonumber
\\
&\hspace{-0.3cm}\leq&\displaystyle \hspace{-0.3cm}
\frac{n}{t^2}\int_{\{u_\lambda\leq t\}} |\nabla v_\lambda|^2|\nabla u_\lambda|^2 \,dx
\end{eqnarray}
for all $t\in(0,\|u_\lambda\|_{L^\infty(\Omega)})$.

Therefore, applying Theorem~\ref{thm:Sobolev} with 
$M=M_t$ and $\phi=|\nabla v_\lambda|^{1/2}$, we obtain
\begin{eqnarray}\label{last:estimate}
\displaystyle C \left(\int_{M_t}\hspace{-0.1cm}|\nabla v_\lambda|^{\frac{n}{n-2}}\,dV\right)^{2\frac{n-1}{n}}
\hspace{-0.5cm}&\leq& \hspace{-0.2cm}
\frac{n}{t^2}\left(\int_{\{u_\lambda\leq t\}} \hspace{-0.2cm}|\nabla v_\lambda|^2|\nabla u_\lambda|^2 \,dx\right)
\left(\int_{M_t}\hspace{-0.1cm}|\nabla v_\lambda|^{\frac{n}{n-2}}\,dV\right)\nonumber
\\
\displaystyle 
&&+\left(\int_{\partial M_t}|\nabla v_\lambda|^{\frac{n-1}{n-2}}\,dS\right)^2,
\end{eqnarray}
where $C$ is a constant depending only on $n$. This is inequality \eqref{estimate}.

Assume in addition that $\Omega$ is convex. To prove that the extremal solution $u^\star$ 
belongs to $W^{1,2\frac{n-1}{n-2}}_0(\Omega)$ we only need to bound the integrals on 
$\{u_\lambda\leq t\}$ and $\partial M_t$, for some $t$, by a constant independent of $\lambda$ and then let 
$\lambda$ tend to $\lambda^\star$. The same argument was done in the proof of 
Theorem~2.7 \cite{CS11}. However, for convinience to the reader we sketch the proof.

Since $\Omega$ is convex, there exist positive constants $\varepsilon$ 
and $\gamma$ independent of $\lambda$ such that
\begin{equation}\label{newnew}
\Vert u_\lambda\Vert_{L^\infty(\Omega_\varepsilon)}
\leq 
\frac{1}{\gamma} \Vert u^\star\Vert_{L^1 (\Omega)}
\qquad\textrm{for all }\lambda<\lambda^\star,
\end{equation}
where  $\Omega_\varepsilon:=\{x\in\Omega : \text{\rm dist}(x,\partial\Omega)<\varepsilon\}$
(see Proposition~4.3 \cite{CS11} and references therein).
Moreover, if $\lambda^\star/2<\lambda<\lambda^\star$, then 
$
u_\lambda\geq
u_{\lambda^\star/2} > c\, \text{\rm dist}(\cdot,\partial\Omega)
$
for some positive constant $c$ independent of $\lambda\in (\lambda^\star/2,\lambda^\star)$.
Therefore, letting $t :=c\varepsilon/2$, we have
$\left\{x\in\Omega: u_\lambda(x)\leq t\right\}\subset \Omega_{\varepsilon/2}
\subset \Omega_{\varepsilon}$.

Note that $u_\lambda$ is a solution of the linear equation $-\Delta u_\lambda= 
h(x):= \lambda f(u_\lambda(x))$ in $\Omega_\varepsilon$ and that, by \eqref{newnew},
$u_\lambda$ and the right hand side $h$ are bounded in $L^\infty(\Omega_\varepsilon)$
by a constant independent of~$\lambda$. 
Hence, using interior and boundary estimates for the linear Poisson equation 
and \eqref{last:estimate}, we deduce that 
$$
\left(\int_{M_t}|\nabla v_\lambda|^{\frac{n}{n-2}}\,dV\right)^{2\frac{n-1}{n}}
\leq
C_1
\int_{M_t}|\nabla v_\lambda|^{\frac{n}{n-2}}\,dV
+C_2
$$
for some constants $C_1$ and $C_2$ independent of $\lambda$.

Finally, noting that $2(n-1)/n>1$ (since $n\geq 3$) and $|\nabla u_\lambda|
\leq |\nabla v_\lambda|$ we obtain 
$$
\int_{\{u_\lambda\geq t\}}|\nabla u_\lambda|^{\frac{n}{n-2}+1}\,dx
\leq
\int_{\{u_\lambda\geq t\}}|\nabla v_\lambda|^{\frac{n}{n-2}+1}\,dx
=
\int_{M_t}|\nabla v_\lambda|^{\frac{n}{n-2}}\,dV
\leq C,
$$
for some constant $C$ independent of $\lambda$.
Letting $\lambda$ tend to $\lambda^\star$ in the previous inequality we conclude 
that $u^\star\in W^{1,2\frac{n-1}{n-2}}_0(\Omega)$ proving the theorem.
\end{proof}

\centerline{{\bf Acknowledgments}}

\smallskip
The author was supported by projects MTM2011-27739-C04-01 (Spain) 
and 2009SGR345 (Catalunya).




\end{document}